\newtheorem{thm}{Theorem}[section]
\newtheorem{cor}[thm]{Corollary}
\theoremstyle{remark}
\newtheorem{remark}[thm]{Remark}
\theoremstyle{ass}
\newtheorem{ass}[thm]{Assumption}
\newcommand{\R}{\mathbb{R}}
\newcommand{\Rd}{\mathbb{R}^{d}}
\newcommand{\D}{\nabla_{A}}
\newcommand{\h}{\mathcal{H}}
\numberwithin{equation}{section}
\title[Unique continuation for magnetic Schr\"odinger]{Unique continuation for magnetic Schr\"odinger operator with singular potentials}
\author[N. Arrizabalaga, M. Zubeldia]{Naiara Arrizabalaga$^{1}$, Miren Zubeldia$^{2}$}
\date{October, 2013}
\subjclass[2010]{Primary 35B60, 35J10, 35Q60.} 
\keywords{Unique continuation theorem, Carleman estimates, magnetic Schr\"odinger operator, singular potentials}
\address{$^{1}$ Departamento de Matem\'aticas, Universidad del Pa\'is Vasco, Apartado 644, 48080, Bilbao, Spain.}
\email{naiara.arrizabalaga@ehu.es}
\address{$^{2}$ Department of Mathematics and Statistics, University of Helsinki, P.O. Box 68, FI00014, Helsinki, Finland.}
\email{miren.zubeldia@helsinki.fi}
\begin{document}

\begin{abstract}
In this paper we study unique continuation theorems for magnetic Schr\"odinger equation via Carleman estimates. We use integration by parts techniques in order to show these estimates. We consider electric and magnetic potentials with strong singularities at the origin and some decay at infinity. 
\end{abstract}

\maketitle

\section{Introduction}

Let us consider  the magnetic Schr\"odinger operator
\begin{equation}
H_{A, V} = \D^2 + V
\end{equation}
with
$$
\D := \nabla +  iA.
$$
Here $A:\Rd \to \Rd$ is the magnetic vector potential and $V:\Rd \to \R$ is the electric scalar potential. We denote by $H_{A}$ the particular case $V=0$. Following the notation of \cite{BVZ}, we define the magnetic field corresponding to the magnetic potential $A$ by the $d\times d$ anti-symmetric matrix given by
\begin{equation}
B = (DA) - (DA)^{t}, \quad \quad B_{kj} = \left(\frac{\partial A_{k}}{\partial x_{j}} - \frac{\partial A_{j}}{\partial x_{k}} \right) \quad k,j=1, \ldots, d.
\end{equation}
In dimension $d=3$, $B$ is identified by the vector field $curl \, A$ via the vector product
\begin{equation}
Bv = curl \, A \times v, \quad \quad \forall v \in \R^3.
\end{equation}
Of particular interest is the tangential part or the trapping component of $B$ defined as
\begin{equation}
B_{\tau}(x) = \frac{x}{|x|}B(x), \quad \quad (B_{\tau})_{j} = \sum_{k=1}^d \frac{x_k}{|x|}B_{kj}.
\end{equation}
Observe that $B_{\tau} \cdot x = 0$ for any $d\geq 2$ and, therefore, $B_{\tau}$ is a tangential vector field in any dimension.

In this paper we are interested in proving unique continuation theorems for the operator
\begin{equation}
H_{A,V}^{\lambda} = H_{A,V} + \lambda \quad \quad \lambda \geq 0,
\end{equation}
with singular electric and magnetic potentials. Let $u$ be a solution to the equation
\begin{equation}\label{ekuazioa}
H_{A,V}^{\lambda} u = 0
\end{equation}
such that decays exponentially or more rapidly than any power of $|x|^{-1}$ at infinity. Then, under suitable assumptions on $V$ and $B_{\tau}$, we shall prove that $u=0$ in $\Rd$, $d\geq 3$.

We point out that we are able to consider magnetic potentials such that $B_{\tau}$ have singularities at the origin as $\frac{\nu_1}{|x|^2}$ for $\nu_1 >0$ small enough. In addition, our results are also valid for singular magnetic potentials $A$ such that $B_{\tau}=0$. The well known Aharonov-Bohm potential,
$$
A= C\left(\frac{-y}{x^2 + y^2}, \frac{x}{x^2 + y^2}, 0 \right),
$$
or magnetic potentials $A$ homogeneous of degree $-1$ that satisfy the transversality condition, i.e.
$$
\lambda A(\lambda x)= A(x) \quad \quad \text{and} \quad \quad x\cdot A(x)=0
$$
for all $\lambda >0$, are some relevant examples that produce non-trapping magnetic fields ($B_{\tau}\equiv 0$).

This kind of singular magnetic potentials have been widely studied by several authors in the last years. It has been showed that small $B_\tau$ plays an important role in the study of the dispersive estimates for magnetic Schr\"odinger equation (see for example \cite{PLNL, FFFP, FG, FV}). Furthermore, uniform a priori estimates for the electromagnetic Helmholtz equation with singular small $B_{\tau}$ or with $B_{\tau}=0$ have been proved in \cite{BFRV, BVZ, F}  and \cite{Z}, among others. One of the important applications of these estimates is the so called \emph{limiting absorption principle}, for which the resolvent operator $-H_{A,V}$ can be extended, on the positive real line, to a bounded operator between weighted $L^2$-spaces. Moreover, starting by these estimates, it is possible to obtain the appropriate Sommerfeld radiation condition implying uniqueness of solution, as it was recently showed by Zubeldia in \cite{Z}. However, in order to prove the uniqueness the author uses a unique continuation result \cite{Re} assuming extra conditions in the magnetic potential $A$ to assure that this result is applicable.

As far as we know, unique continuation problems for magnetic Schr\"odinger operators with the singularities mentioned above can not be found in the literature. Singular magnetic fields of Kato class are considered by Kurata in \cite{Ku1} and \cite{Ku2}. More recently, Xiaojun Lu in \cite{xl} studies the unique continuation problem assuming some smoothness condition on the magnetic potential, more concretely, $A \in C^1$. Whereas, there are more works for singular electric potentials, as for example, \cite{Pa}, where Pan proves several unique continuation theorems for the Schr\"odinger operator with critical singularities on the electric potential as $|x|^{-2}$ in the unit ball. Furthermore, there are several works concerning with general elliptic operators. See \cite{BKRS, GL1, GL2, KT, Re, So} and \cite{Wo1}, among  others.

In this work we prove unique continuation results for Schr\"odinger operators with singular electric and magnetic potentials that we specify below. These results are shown by using Carleman type estimates as a main tool. We show those estimates for the operator  $H_{A,V}^{\lambda}$ and for the particular case $V=0$ denoted by $H_{A}^{\lambda}$. Carleman estimates with two different weights are given for both operators. On the one hand, we show estimates with exponential weight of the type $e^{\tau|x|(1+\log |x|)}$ with $\tau >0$ which are used to show that if $u$ satisfies the equation $H_{A,V}^{\lambda}u = 0$ and decays exponentially at infinity, then $u$ vanishes identically in $\Rd$. On the other hand, Carleman estimates with polynomial weight $|x|^{\tau}$, $\tau >0$, are needed for solutions $u$ such that $|x|^{-m} u$ tends to zero as $|x|$ tends to infinity for every $m>0$.

The Carleman estimates are proved by using integration by parts. Thus in order to justify this argument we need some regularity on the solution $u$. For this purpose, we define the distorted Sobolev space $\h^s$ generated by the positive powers of the operator $H_{A,V}$ denoted by $H_{A,V}^s$ . More precisely,
\begin{equation}
\Vert f\Vert_{\h^s} := \Vert H_{A,V}^s f \Vert_{L^2}, \quad \quad s\geq 0.
\end{equation}
These spaces are the natural ones in which we work in the sequel. Note that if the Hamiltonian $H_{A,V}$ is self-adjoint and positive, the existence of $H_{A,V}^s$ is ensured via the spectral theorem. In addition, it can be shown that under certain conditions on the potentials $A,V$ the standard Sobolev space $H^s$ and $\h^s$ are equivalent. For more details we refer the reader to \cite{PLNL} (Theorem 1.2). This construction has also been used in \cite{F} (Remark 2.1) and in \cite{BFRV} (Remark 1.2).

Now we state the assumptions on the potentials considered in the unique continuation theorems. We will always assume $r_0\geq 1$. 

\begin{ass}\label{ass1}
For $d\geq3$,
\begin{equation*}\label{btau}
|B_{\tau}| \leq\left\{
\begin{split}
&\frac{\nu_1}{r^2},\quad r\leq r_0,\\
&\frac{\nu_2}{r},\quad r>r_0,
\end{split}
\right.
\qquad
(\partial_rV)_- \leq\left\{
\begin{split}
&\frac{\mu_1}{r^3},\quad r\leq r_0,\\
&\frac{\mu_2}{r},\quad r>r_0,
\end{split}
\right.
\end{equation*}
such that $ \nu_1^2+2\mu_1 < 2(d-1)(d-2)$ and $\frac{2}{3}\nu_2^2+\mu_2 < 1$. \end{ass}

\begin{ass}\label{ass2}
For $d\geq3$,
\begin{equation*}\label{btau}
|B_{\tau}| \leq\left\{
\begin{split}
&\frac{\nu_1}{r^2},\quad r\leq r_0,\\
&\frac{c}{r^{1+\mu}},\quad r>r_0,
\end{split}
\right.
\quad
(\partial_rV)_-\leq\left\{
\begin{split}
&\frac{\mu_1}{r^3},\quad r\leq r_0,\\
&\frac{c}{r^{1+\mu}},\quad r>r_0,
\end{split}
\right.
\end{equation*}
such that $ \nu_1^2+2\mu_1 < 2(d-1)(d-2)$ and $c,\mu>0$. 
\end{ass}

Here $(\partial_rV)_-$ denotes the negative part of $\partial_r V$. Moreover, $V=V_+-V_-$, where $V_{\pm}$ denotes the positive and negative parts of $V$, respectively. We can now formulate our main results.

\begin{thm}\label{UCP} 
Let $d\geq3$, $\lambda \geq 0$. Assume that the potentials satisfy one of the following hypotheses:

\begin{itemize}
\item[(i)] Assumption \ref{ass1} or \ref{ass2}. In addition, for $r>r_0$ $V$ is such that
\begin{equation}\label{vbarrixa}
V_{-} \leq \tau \beta \left(\log r +2 \right)\left[ \tau\beta\left(\log r +2 \right) +\frac{(d-1)}{r}\right] + \frac{\tau\beta}{2r} + V_+,
\end{equation}
where $\tau\beta > \frac{d-2}{r_0 (2+\log r_0)}$.
\item[(ii)] Let $B_{\tau}$ as in  Assumption \ref{ass1} or \ref{ass2} and, for $\gamma_i>0$ $i=1,2$, $V$ holds
\begin{equation}\label{Vucp}
|V| \leq\left\{
\begin{split}
&\frac{\gamma_1}{r^{3/2}},\quad r\leq r_0,\\
&\frac{\gamma_2(log r + 1)^{1/2}}{r^{1/2}}, \quad r>r_0.
\end{split}
\right.
\end{equation}
\end{itemize}
If $u \in \h^1$ is a solution of (\ref{ekuazioa}) for some $\varepsilon >0$ and satisfies 
\begin{align}\label{exp}
\int_{|x|>R} (|u|^2 + |\D u|^2) = O\left(e^{-mR(1+\log R)} \right) \quad \quad (R\to \infty)
\end{align}
for every $m>0$, then $u = 0$.
\end{thm}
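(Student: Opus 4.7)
The strategy is a classical Carleman-weight bootstrap. The idea is to apply a Carleman estimate with exponential weight $\phi_\tau(|x|) = e^{\tau\beta |x|(1+\log|x|)}$---which the authors signal will be established earlier in the paper for both $H_{A,V}^\lambda$ and $H_A^\lambda$---to a cutoff of the solution, and use the super-exponential decay (\ref{exp}) to drive the remainder to zero.

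Fix a smooth radial cutoff $\chi_R$ with $\chi_R\equiv 1$ on $|x|\leq R/2$, $\chi_R\equiv 0$ on $|x|\geq R$, and $|\nabla^k\chi_R|\leq C R^{-k}$. Set $v_R := \chi_R u$. A direct computation using the Leibniz rule for $\nabla_A$ together with $H_{A,V}^\lambda u=0$ gives
\begin{equation*}
H_{A,V}^\lambda v_R \;=\; 2\,\nabla\chi_R\cdot\nabla_A u + u\,\Delta\chi_R,
\end{equation*}
supported in the annulus $A_R := \{R/2\leq |x|\leq R\}$. In case (i), apply the Carleman estimate for $H_{A,V}^\lambda$ with weight $\phi_\tau$; Assumption \ref{ass1} (resp.\ \ref{ass2}) ensures that the singular $B_\tau$ contribution is absorbed by the Hardy-type positive terms on the left, and (\ref{vbarrixa}) is exactly the pointwise condition making $V_-$ absorbable in the exponential-weight version. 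In case (ii), apply instead the Carleman estimate for $H_A^\lambda$ to $v_R$ and move the potential term $Vv_R$ to the right; the two-scale bound (\ref{Vucp})---namely $\gamma_1 r^{-3/2}$ for $r\leq r_0$ and $\gamma_2(\log r+1)^{1/2}r^{-1/2}$ for $r>r_0$---is, for $\tau$ sufficiently large, dominated by the coefficient of $|v_R|^2$ produced by the Carleman estimate, permitting its absorption. In both situations one arrives at an inequality of the schematic form
\begin{equation*}
\int \phi_\tau^2\,W_\tau(r)\,|v_R|^2\,dx \;\leq\; C\int_{A_R}\phi_\tau^2\Bigl(R^{-2}|\nabla_A u|^2 + R^{-4}|u|^2\Bigr)dx,
\end{equation*}
with $W_\tau$ strictly positive throughout $\Rd$.

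To conclude, observe that on $A_R$ we have $\phi_\tau^2\leq e^{2\tau\beta R(1+\log R)}$. Pick any $m>2\tau\beta$; the decay (\ref{exp}) then gives $\int_{A_R}(|u|^2+|\nabla_A u|^2) = O(e^{-mR(1+\log R)})$, so the right-hand side above is $O(e^{-(m-2\tau\beta)R(1+\log R)})$ and tends to $0$ as $R\to\infty$. By monotone convergence the left-hand side converges to $\int\phi_\tau^2 W_\tau|u|^2$, forcing $u\equiv 0$ in $\Rd$.

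The genuine difficulty lies upstream in the Carleman estimates themselves rather than in the reduction above: one must combine the positive-commutator method with sharp Hardy-type inequalities (this is where the numerical conditions $\nu_1^2+2\mu_1<2(d-1)(d-2)$ and $\tfrac{2}{3}\nu_2^2+\mu_2<1$ enter) so that the destabilising contributions of $B_\tau$ and $V_-$ are absorbed by the positive terms generated by the weight, with a $\tau$-prefactor that survives after all cancellations. A minor technical point is that $v_R$ must be admissible as a test function for the Carleman estimate; this follows from $u\in\h^1$ and the equivalence $\h^1\simeq H^1$ recalled in Section 1, via a standard density argument.
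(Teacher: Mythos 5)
Your overall strategy---apply the exponential-weight Carleman estimate to a cutoff of $u$ and use (\ref{exp}) to kill the commutator at infinity---is the same as the paper's, but there are two genuine gaps.

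First, your single cutoff $v_R=\chi_R u$ is not an admissible test function for the Carleman estimate. The estimate (\ref{carleman}) carries the term $\tau\int e^{2\tau\varphi}|u|^2/|x|^3$ on its left-hand side, and the potentials are allowed singularities like $|B_\tau|\sim\nu_1|x|^{-2}$ and $(\partial_r V)_-\sim\mu_1|x|^{-3}$ at the origin; for $d=3$ the weight $|x|^{-3}$ is not even locally integrable against $|u|^2$ for an $H^1$ function that does not vanish at $0$, so the inequality cannot be extended to $\chi_R u$ by density. The paper therefore uses the double cutoff $u_{\rho,R}=\chi(x/R)\bigl(1-\chi(x/\rho)\bigr)u$, excising a ball of radius $\rho$ about the origin. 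This produces a second commutator term, supported on $\rho\le|x|\le 2\rho$, of size $Ce^{2\tau\beta\rho(2+\log r_0)}/\rho^4$, which does \emph{not} tend to zero as $R\to\infty$. Your conclusion that $u\equiv 0$ for a single fixed $\tau$ is exactly what this term obstructs: one must first bound the left-hand side below by $e^{2\tau\beta\rho(2+\log r_0)}\int_{|x|>2\rho}|u|^2/r^3$ (using monotonicity of $\varphi$), cancel the common exponential factor, obtain $\int_{|x|>2\rho}|u|^2/r^3\le C/(\tau\rho^4)$, and only then send $\tau\to\infty$ and finally $\rho\to 0$. This second half of the argument is entirely missing from your write-up, and it is not optional.

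Second, you misidentify the role of (\ref{vbarrixa}). It is not a pointwise condition allowing $V_-$ to be absorbed inside the Carleman estimate. The estimate (\ref{carleman}) contains surface integrals over $\{r=r_0\}$, namely $\frac{\tau\beta}{r_0}\Re\int_{r=r_0}e^{2\tau\varphi}\D^r u\,\bar u-\frac{\tau\beta(d-2)}{2r_0^2}\int_{r=r_0}e^{2\tau\varphi}|u|^2$, which have no sign a priori. The first part of the paper's proof multiplies the equation by $e^{2\tau\varphi}\bar u$, integrates over $r>r_0$, and rewrites the boundary term as a bulk integral; positivity of the result is precisely where (\ref{vbarrixa}) and the threshold $\tau\beta>(d-2)/(r_0(2+\log r_0))$ are used. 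Your ``schematic inequality'' with a strictly positive $W_\tau$ presupposes this step without carrying it out. The description of case (ii) (moving $Vu$ to the right and absorbing via (\ref{Vucp}) for $\tau$ large) is in the right spirit, but it too must be run on $u_{\rho,R}$ rather than on $\chi_R u$.
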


\begin{thm}\label{(SUCP)}
Let $d\geq 3$, $\lambda >0$. Assume that $B_{\tau} \equiv 0$ and $V$ holds
\begin{equation}\label{assuV} 
\int \left[ \partial_r (rV) + V \right]_{-} |v|^2 < \int \left[ \partial_{r}(rV) + V \right]_{+} |v|^2
\end{equation}
 or 
\begin{equation}\label{Vsucp}
|V| \leq \frac{C}{|x|},
\end{equation}
for some $C>0$. If $u \in \h^1$ is a solution of (\ref{ekuazioa}) for some $\varepsilon >0$ satisfying 
\begin{equation}\label{pol}
\int_{|x|> R} (|u|^2 + |\D u|^2) = O\left( R^{-m} \right) \quad \quad (R\to \infty)
\end{equation}
for all $m > 0$, then $u =0$.
\end{thm}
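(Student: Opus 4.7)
The plan is to establish a Carleman estimate with polynomial weight $|x|^{\tau}$ for the magnetic operator $H_{A}^{\lambda}$ (one of the two families of estimates announced in the introduction) and to apply it to $u$ in combination with the super-polynomial decay hypothesis \eqref{pol}. Schematically, the estimate to be proved has the form
\[
\int W_{\tau}(|x|)\bigl(\tau\,|u|^{2}+|\D u|^{2}\bigr) \;\leq\; C\int W_{\tau}(|x|)\,|x|^{2}\,|H_{A}^{\lambda}u|^{2},
\]
with $W_{\tau}(r)$ a power of $r$ growing in $\tau$ at infinity (so that the weight is compatible with \eqref{pol}). I would derive it by the conjugation/positive-commutator method used throughout the paper: conjugate $H_{A}^{\lambda}$ by $e^{\phi}$ with $\phi=\tau\log|x|$, split the result into its self-adjoint and antisymmetric parts, and integrate by parts. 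The vanishing of $B_{\tau}$ removes precisely the cross term involving the trapping component of the magnetic field that cannot otherwise be controlled, and the strict positivity of $\lambda$ supplies an additional non-negative term at the zeroth-order level of the commutator (which is why $\lambda>0$, rather than $\lambda\geq 0$, is imposed here). Under \eqref{assuV} in place of \eqref{Vsucp}, the same computation furnishes the extra potential term $\int W_{\tau}\bigl[\partial_{r}(rV)+V\bigr]|v|^{2}$, whose positive part dominates by assumption.

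Next I would apply the estimate to $u$. From $H_{A,V}^{\lambda}u=0$ we have $H_{A}^{\lambda}u=-Vu$, and under \eqref{Vsucp} this yields
\[
\int W_{\tau}(|x|)\,|x|^{2}\,|Vu|^{2} \;\leq\; C^{2}\int W_{\tau}(|x|)\,|u|^{2},
\]
which is absorbed into the $\tau$-term on the left by choosing $\tau$ large enough. Under \eqref{assuV} the absorption is immediate because the extra potential term has a favourable sign. Since $u$ is not compactly supported, I would apply the estimate not to $u$ but to $\chi_{R,\rho}u$, where $\chi_{R,\rho}$ is a smooth cut-off vanishing near the origin and outside $\{|x|<2R\}$. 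The commutator errors $[H_{A}^{\lambda},\chi_{R,\rho}]u$ are supported on $\{R<|x|<2R\}\cup\{\rho<|x|<2\rho\}$ and, by \eqref{pol} (uniformly in $\tau$) together with the $\h^{1}$-regularity of $u$ (controlling behaviour near the origin), they tend to $0$ as $R\to\infty$ and $\rho\to 0$.

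Passing to these limits yields, for $\tau$ large, a bound of the form $\int W_{\tau}(|x|)\,|u|^{2}\leq 0$, since all the source terms have been absorbed or killed. Because $W_{\tau}$ is strictly positive on $\Rd\setminus\{0\}$, this forces $u\equiv 0$ on $\Rd\setminus\{0\}$ and hence $u=0$ in $\Rd$ as an $\h^{1}$-function.

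The main obstacle I expect to face is the rigorous justification of the Carleman estimate for $u\in\h^{1}$, namely the simultaneous management of the two cut-offs in the presence of the $\tau$-dependent weight and of the strong singularities of $A$ near the origin. The outer cut-off must exploit \eqref{pol} uniformly in $\tau$, the inner cut-off must not destroy the weighted integrals at the origin, and both must be compatible with the positive-commutator structure so that the gain in $\tau$ on the left survives the absorption. Balancing these three requirements, rather than any algebraic step in the commutator computation itself, is the delicate part of the argument.
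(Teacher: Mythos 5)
Your proposal follows essentially the same route as the paper: a polynomial-weight Carleman estimate obtained by conjugating with $|x|^{\tau}$, splitting into symmetric and antisymmetric parts with the extra factor $|x|^{2}$ on the right-hand side (this is the paper's Theorem \ref{carlemanpoly} and Corollary \ref{coropoly}), applied to a doubly cut-off version of $u$, with the outer error killed by \eqref{pol}, the inner error controlled by $u\in\h^{1}$, the potential absorbed either by the sign condition \eqref{assuV} or, under \eqref{Vsucp}, into the $\tau\lambda$-term for $\tau$ large, and a final limiting argument (your order of limits, $\rho\to 0$ at fixed large $\tau$, works just as well as the paper's $\tau\to\infty$ at fixed $\rho$). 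The one inaccuracy is your schematic left-hand side $\int W_{\tau}\left(\tau|u|^{2}+|\D u|^{2}\right)$: the commutator computation with the weight $\varphi=\tau\log|x|$ produces no gradient term at all --- the left-hand side is exactly $4\tau\lambda\int|x|^{2\tau}|u|^{2}$ plus the potential term, and the absence of $|\D u|^{2}$ is precisely why $B_{\tau}\equiv 0$ (rather than merely small $B_{\tau}$) must be assumed; since your argument never actually uses the gradient term, this overstatement does not affect the validity of the proof.
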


\begin{remark}
An example of electric potential $V$ satisfying condition (\ref{assuV}) is the following
\begin{equation}
V(x)= \frac{V_{\infty}\left(\frac{x}{|x|}\right)}{|x|}, \quad \quad V_{\infty} >0.
\end{equation}
In this case we have $\partial_{r}(rV)=0$ and $V_{-}=0$. 
\end{remark}

Conditions (\ref{exp}) and (\ref{pol}) in the above theorems are related to the Carleman estimates used in the proofs of the theorems. In fact, Theorem \ref{UCP} is proved by using the Carleman estimate with exponential weight, while Theorem \ref{(SUCP)} involves the polynomial one. The required assumptions on the tangential component of the magnetic field also concern with this issue. The hypothesis $B_{\tau}=0$ is necessary to show the corresponding Carleman estimate of Theorem \ref{(SUCP)}. We point out that in order to consider magnetic potentials such that $|B_{\tau}|$ has singularities, one needs to work with exponential type Carleman estimates and, as a consequence, with condition (\ref{exp}). On the other hand, the hypotheses on $V$ are different depending on whether the Carleman estimate involves the operator $H_{A,V}^\lambda$ or $H_A^\lambda$. We can consider more singular electric potentials when we show the Carleman estimate for the operator $H_{A,V}^{\lambda}$. That is, when the potential $V$ itself is included on the estimate. Finally, the fact that $\lambda = 0$ is not considered in Theorem \ref{(SUCP)} is again another consequence of the polynomial Carleman estimate; assumption $\lambda >0$ is essential for that.

Section \ref{section2} is devoted to the proof of the Carleman estimates. Whereas the proofs of Theorem \ref{UCP} and Theorem \ref{(SUCP)} can be found in Section \ref{uc}.

\subsection*{Notation}
For the sequel, $C>0$ denotes a constant which may change its value at different occurrences. Throughout this paper $|x|$ and $r$ are used indistincly. Moreover, to shorten notation, we write $\int$ for the integral in the whole $\Rd$ with respect to the Lebesgue measure $dx$. We consider the outward normal vector of $r\leq r_0$  as $\frac{x}{|x|}$ and the one of $r> r_0$ as $-\frac{x}{|x|}$.

\section{Carleman estimate}\label{section2}

In this section we prove Carleman estimates with exponential and polynomial weights by using integration by parts techniques.

\subsection{Exponential weight}

We give a new Carleman estimate for the magnetic Schr\"odinger operator with critical singularities on the potentials. We first introduce the function $\varphi(x)=\varphi(|x|)>0$ which is the one that determines the weight. For $\beta > 0$ and $r_0 \geq 1$, it is given by
\begin{equation}\label{fi}
\varphi(x):=\left\{
\begin{split}
&\left(\beta+\frac{\beta}{2}\log r_0\right) |x|,\quad |x|\leq r_0,\\
&\frac{\beta}{2}|x|(\log |x|+1)+\frac{\beta}{2},\quad |x|>r_0,
\end{split}
\right.
\end{equation}

Now we are ready to state the result.

\begin{thm}\label{procarleman}
Let $d\geq 3, \, \lambda\geq 0,\, \tau>0$ and $\varphi$ defined in (\ref{fi}). Under Assumption \ref{ass2}  with $r_0$ large enough or Assumption \ref{ass1}, then for all $u\in C_0^{\infty}(\R^d)$
\begin{align}\label{carleman}
&\tau\int e^{2\tau \varphi}\frac{|u|^2}{|x|^3}+\tau\int_{r > r_0} e^{2\tau \varphi}\frac{\log |x|}{|x|^3}|u|^2 + \tau\beta\int_{r>r_0} e^{2\tau\varphi} |\D u|^{2} \\
&+\tau^2(1+\tau)\int_{r>r_0} e^{2\tau \varphi}(\log |x|+1)\frac{|u|^2}{|x|}+\tau\lambda\int_{r>r_0} e^{2\tau \varphi}\frac{|u|^2}{|x|}\notag\\ 
&+\frac{\tau\beta}{r_0}\Re \int_{r=r_0}e^{2\tau \varphi}\D^ru \bar{u}-\frac{\tau\beta(d-2)}{2r_0^2}\int_{r=r_0}e^{2\tau \varphi}|u|^2\notag\\
& \leq C \int e^{2\tau \varphi}|\D^2 u +V u +\lambda u|^2, \notag
\end{align}
where $C>0$ is independent of $\tau$.
\end{thm}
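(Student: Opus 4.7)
The plan is to use the standard conjugation scheme. Setting $f = e^{\tau\varphi}u$, the desired estimate is equivalent to a coercivity bound for the conjugated operator $L_\tau := e^{\tau\varphi}H_{A,V}^\lambda e^{-\tau\varphi}$. A direct computation based on $e^{\tau\varphi}\D(e^{-\tau\varphi}f) = (\D-\tau\nabla\varphi)f$ gives $L_\tau = \mathcal{S} + \mathcal{T}$, where
\[
\mathcal{S} = \D^2 + \tau^2|\nabla\varphi|^2 + V + \lambda, \qquad \mathcal{T} = -2\tau\,\nabla\varphi\cdot\D - \tau\,\Delta\varphi.
\]
Since $\nabla\varphi$ is real, $\mathcal{S}$ is symmetric and $\mathcal{T}$ is skew-symmetric on $L^2(\Rd,\mathbb{C})$, so
\[
\|L_\tau f\|_2^2 = \|\mathcal{S}f\|_2^2 + \|\mathcal{T}f\|_2^2 + \langle[\mathcal{S},\mathcal{T}]f,f\rangle \ \geq\ \langle[\mathcal{S},\mathcal{T}]f,f\rangle,
\]
and the whole task is to extract the right-hand side of \eqref{carleman} from this commutator.

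Next I would split $[\mathcal{S},\mathcal{T}] = [\D^2,\mathcal{T}] + [\tau^2|\nabla\varphi|^2+V,\mathcal{T}]$. The lower-order piece is immediate:
\[
[\tau^2|\nabla\varphi|^2+V,\mathcal{T}] = 2\tau\,\nabla\varphi\cdot\nabla\bigl(\tau^2|\nabla\varphi|^2+V\bigr) = 4\tau^3(\varphi')^2\varphi'' + 2\tau\varphi'\,\partial_r V.
\]
For $[\D^2,\mathcal{T}]$ I would expand via the magnetic Bochner identity $[\D_j,\D_k] = iB_{jk}$ and integrate by parts. The resulting quadratic form produces (i) a nonnegative Hessian term $4\tau\int D^2\varphi(\D f,\overline{\D f})$, (ii) a bi-Laplacian term $-\tau\int(\Delta^2\varphi)|f|^2$, and (iii) a magnetic cross term $-4\tau\int \varphi'\,\operatorname{Im}\bigl(\overline{f}\,B_\tau\cdot\D f\bigr)$; only the tangential part of $B$ survives here because $\sum_j x_j B_{jk} = |x|(B_\tau)_k$, so $\nabla\varphi$ pairs against $B$ precisely through $B_\tau$.

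I would then specialize to the weight \eqref{fi}. In the outer region $\varphi'=\tfrac{\beta}{2}(\log r+2)$ and $\varphi''=\tfrac{\beta}{2r}$, so $4\tau^3(\varphi')^2\varphi''$ directly produces the $\tau^2(1+\tau)(\log|x|+1)|u|^2/|x|$ term, and the Hessian lower bound $D^2\varphi\geq \tfrac{\beta}{2r}\mathrm{I}$ yields the $\tau\beta\int_{r>r_0}e^{2\tau\varphi}|\D u|^2$ contribution. In the inner region $\varphi$ is linear, so $\varphi''\equiv 0$ and the radial eigenvalue of $D^2\varphi$ vanishes; the positive $|u|^2/r^3$ term must be extracted from the tangential eigenvalues $\varphi'/r$ of $D^2\varphi$ combined with the diamagnetic Hardy inequality $\int|\D u|^2 \geq \tfrac{(d-2)^2}{4}\int|u|^2/r^2$, which is exactly how the factor $(d-1)(d-2)$ enters. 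The jump of $\varphi''$ across $\{|x|=r_0\}$, together with the induced jump of $\Delta\varphi$, contributes surface Dirac measures to $\Delta^2\varphi$ and $D^2\varphi$ that, after integration by parts, generate precisely the two boundary integrals on $\{|x|=r_0\}$ in \eqref{carleman}.

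The final step is absorption of the sign-indefinite contributions. The magnetic cross term (iii) is bounded by Cauchy--Schwarz as $\epsilon\int|\D u|^2 + \epsilon^{-1}\int(\varphi')^2|B_\tau|^2|u|^2$, while the electric contribution is handled through $\partial_rV \geq -(\partial_rV)_-$. Using the pointwise bounds from Assumptions \ref{ass1}--\ref{ass2} and the Hardy inequality, both terms are absorbed by the positive $\int|\D u|^2$ and $\int|u|^2/r^3$ contributions; the sharp numerical thresholds $\nu_1^2+2\mu_1<2(d-1)(d-2)$ (inner) and $\tfrac{2}{3}\nu_2^2+\mu_2<1$ (outer, Assumption \ref{ass1}) are exactly what makes the absorption strict. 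I expect the hardest step to be this sharp absorption in the outer region under Assumption \ref{ass1}, where $|B_\tau|\lesssim 1/r$ and $(\partial_rV)_-\lesssim 1/r$ are both critical against the Hardy weight, so every constant must be tracked carefully; under Assumption \ref{ass2} the faster decay $1/r^{1+\mu}$ makes the outer absorption automatic once $r_0$ is taken large enough, which is why the second numerical condition may be dropped in that case.
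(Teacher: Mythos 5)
Your overall scheme is the same as the paper's: conjugate with $f=e^{\tau\varphi}u$, split the conjugated operator into a symmetric part and a skew-symmetric part, discard $\|\mathcal{S}f\|^2+\|\mathcal{T}f\|^2$ and keep only the cross term $2\Re\langle\mathcal{S}f,\mathcal{T}f\rangle=\langle[\mathcal{S},\mathcal{T}]f,f\rangle$, identify the Hessian, zeroth-order and magnetic contributions, track the boundary terms on $\{r=r_0\}$ coming from the jump of $\varphi''$, and finally absorb the $B_\tau$ and $(\partial_rV)_-$ terms by Cauchy--Schwarz using the smallness constants of Assumptions \ref{ass1}--\ref{ass2}. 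All of that matches Section \ref{section2}.

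The gap is in the inner region $r\leq r_0$, where the first term of (\ref{carleman}), $\tau\int e^{2\tau\varphi}|u|^2/|x|^3$, must be produced. There $\varphi$ is linear, so from your commutator you get only the tangential Hessian term $4\tau\int\frac{\varphi'}{r}|\D^{\bot}f|^2$ together with the zeroth-order term $-\tau\int(\Delta^2\varphi)|f|^2=\tau(d-1)(d-3)\int\varphi'\,|f|^2/r^3$, which vanishes when $d=3$. Your proposed rescue --- tangential eigenvalues of $D^2\varphi$ combined with the diamagnetic Hardy inequality --- cannot close this: Hardy's inequality is driven by the radial derivative, whose Hessian coefficient is $\varphi''=0$ on $r\leq r_0$, while the tangential term vanishes on radial functions and so gives no control of $\int|f|^2/r^3$; moreover Hardy yields the weight $r^{-2}$, not $r^{-3}$. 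Note also that your zeroth-order coefficient $-\tau\Delta^2\varphi$ does not agree with the coefficient the paper actually derives in (\ref{c1}), namely $2\tau\bigl(\varphi'\tfrac{(d-1)(d-2)}{|x|^3}-\cdots\bigr)$, and it is precisely this explicit $(d-1)(d-2)/|x|^3$ term that the singular parts $|B_\tau|\leq\nu_1/r^2$ and $(\partial_rV)_-\leq\mu_1/r^3$ are absorbed into, and that fixes the threshold $\nu_1^2+2\mu_1<2(d-1)(d-2)$. As written, your argument does not establish the $|u|^2/|x|^3$ term on $r\leq r_0$ (in particular for $d=3$), so the entire inner-region absorption collapses; you must carry out the integration by parts explicitly (as the paper does in (\ref{i1})--(\ref{i13})) and reconcile your commutator formula with the coefficient obtained there before the rest of the plan can proceed.
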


\begin{proof}

Let $v= e^{\tau\varphi}u$. Then it follows that
\begin{align}\label{equ1}
e^{\tau \varphi} (\D^2 u +V u + \lambda u) &= \D^2 v +V v + \lambda v + \tau^2 (\varphi')^2v\\
& - \frac{2\tau}{|x|}\left(\varphi'x\cdot \D v + \frac{\varphi''|x|+(d-1)\varphi'}{2}v \right)\notag
\end{align}
where $\varphi'$ denotes the radial derivative of $\varphi$.
Set
\begin{align}
& L_1 v = (\D^2  +V+ \lambda  + \tau^2(\varphi')^2) v \notag\\
& L_2  v= - \frac{2\tau}{|x|}\left(\varphi'x\cdot \D v + \frac{\varphi''|x|+(d-1)\varphi'}{2}v \right).\notag
\end{align}
Note that $L_1$ and $L_2$ are symmetric and skew-symmetric operators, respectively. 

Let $M$ denote the integral on the right-hand side of the inequality (\ref{carleman}). Then we have
\begin{align}
M & := \int e^{2\tau \varphi}|\D^2 u +V u +\lambda u|^2 \notag \\
& = \int |L_1 v + L_2 v|^2 \notag \\
& = \int |L_1 v|^2 + \int |L_ 2 v|^2 + 2\Re \int L_1 v \overline{L_2 v}. \notag
\end{align}
In particular, it yields
\begin{align}\label{mre}
2\Re \int L_1 v \overline{L_2 v} \leq M.
\end{align}

We next compute the term $\Re \int L_1 v \overline{L_2 v}$, which can be written as 
\begin{align}\label{Re}
\Re \int L_1 v \overline{L_2 v} = &-2\tau\Re\int \D^2 v \varphi' \frac{x}{|x|}\cdot\overline{\D v} - \tau\Re\int \left(\varphi''+\frac{\varphi'(d-1)}{|x|}\right)\D^2 v\bar{v}\\
& - 2\tau\lambda\Re\int \varphi'\frac{x}{|x|}\cdot\overline{\D v} v -\tau\lambda\int  \left(\varphi''+\frac{\varphi'(d-1)}{|x|}\right)|v|^2\notag\\
& -2\tau^3 \Re \int (\varphi')^3 \frac{x}{|x|}\cdot \overline{\D v}v -\tau^3\int (\varphi')^2 \left(\varphi''+\frac{\varphi'(d-1)}{|x|}\right)|v|^2\notag\\
& -2\tau\Re \int \varphi' V \frac{x}{|x|}\cdot \overline{\D v}v-\tau\int V \left(\varphi''+\frac{\varphi'(d-1)}{|x|}\right)|v|^2.\notag
\end{align}

We start studying the $\lambda$ terms. Since $\Re \overline{\D v} v = \Re \D v \bar{v}$, by integration by parts it is easy to check that
\begin{align*}
-2\tau\lambda \Re \int \varphi'\frac{x}{|x|}\cdot\overline{\D v}v &= \tau\lambda\int\left(\varphi''+\frac{\varphi'(d-1)}{|x|}\right)|v|^2\\
& +2\tau\lambda \Re \int_{r=r_0}(\varphi'(r_0^+)-\varphi'(r_0^-))|v|^2\notag.
\end{align*}
Here $r_0^{\pm}$ denote the limit of $r$ to $r_0$ from outside or inside the ball of radius $r_0$, respectively. Note that because of the definition of $\varphi$ some boundary integrals appear at $r=r_0$. This phenomena will happen in all the integration by parts of this paper. Moreover, since $\varphi'$ is continuous at $r_0$, i.e., $\varphi'(r_0^+)=\varphi'(r_0^-)$, it follows that 
\begin{align}\label{lambdaterm}
&-2\tau\lambda \Re \int \varphi'\frac{x}{|x|}\cdot\overline{\D v}v -\tau\lambda\int  \left(\varphi''+\frac{\varphi'(d-1)}{|x|}\right)|v|^2=0.
\end{align}

Similarly, we get
\begin{align}\label{tauterm}
&-2\tau^3 \Re \int (\varphi')^3\frac{x}{|x|}\cdot \overline{\D v} v - \tau^3\int (\varphi')^2 \left(\varphi''+\frac{\varphi'(d-1)}{|x|}\right)|v|^2\\
&=2\tau^3\int(\varphi')^2\varphi''|v|^2\notag.
\end{align}

Observe that by integration by parts we obtain
\begin{align*}
 &-2\tau\Re \int \varphi' V \frac{x}{|x|}\cdot \overline{\D v}v=\tau\int \varphi'(\partial_r V)|v|^2+\tau\int V \left(\varphi''+\frac{\varphi'(d-1)}{|x|}\right)|v|^2.
\end{align*}
Thus, the potential terms in (\ref{Re}) reduces to 
$$\tau\int \varphi'(\partial_r V) |v|^2.$$

We next analyze the $\D^2 v$ terms. Note that $\varphi'', \varphi'''$ are not continuous. Applying the same reasoning as above to the term involving $\D^2 v \bar{v}$, we obtain
\begin{align}\label{i1}
&-\tau \Re \int \left(\varphi''+\frac{\varphi'(d-1)}{|x|}\right)\D^2 v \bar{v} \\
 &= \tau \Re \int \left[\varphi'''+\left( \frac{\varphi''}{|x|}-\frac{\varphi'}{|x|^2}\right)(d-1) \right]\frac{x}{|x|} \cdot \D v \bar{v}\notag\\
 &+\tau\int \left(\varphi''+\frac{\varphi'(d-1)}{|x|}\right)|\D v|^2\notag \\
& + \tau \Re \int_{r=r_0}(\varphi''(r_0^+)-\varphi''(r_0^-))\frac{x}{|x|}\cdot\D v\bar{v}  \notag. 
\end{align}
Now, integrating by parts the first term on the right hand side of (\ref{i1}) we get that it is equal to
\begin{align}\label{i12}
&-\frac{\tau}{2}\int \left[\varphi^{IV}+\left(2\varphi'''+\varphi''\frac{d-3}{|x|}-\varphi'\frac{d-3}{|x|^2}\right)\frac{d-1}{|x|}\right]|v|^2\\ \notag
&-\frac{\tau}{2}\int \left[\varphi^{'''}+ \left(\frac{\varphi''}{|x|}-\frac{\varphi'}{|x|^2}\right) (d-1)\right]\frac{d-1}{|x|}|v|^2\\ \notag
&-\frac{\tau}{2}\int_{r=r_0} \left[\varphi'''(r_0^+)-\varphi'''(r_0^-)+\frac{(d-1)}{r_0}\left(\varphi''(r_0^+)-\varphi''(r_0^-)\right)\right]|v|^2.\notag
\end{align}
We introduce (\ref{i12}) in (\ref{i1}) and we get that 
\begin{align}\label{i13}
&-\tau \Re \int \left(\varphi''+\frac{\varphi'(d-1)}{|x|}\right)\D^2 v \bar{v} \\\notag
 &=-\frac{\tau}{2}\int \left[\varphi^{IV}+\left(3\varphi'''+\varphi''\frac{2(d-2)}{|x|}-\varphi'\frac{2(d-2)}{|x|^2}\right)\frac{d-1}{|x|}\right]|v|^2\\\notag
 &+\tau\int \left(\varphi''+\frac{\varphi'(d-1)}{|x|}\right)|\D v|^2\notag \\
 & +\tau \Re \int_{r=r_0}\left(\varphi''(r_0^+)-\varphi''(r_0^-)\right)\frac{x}{|x|}\cdot\D v\bar{v}  \notag\\
&-\frac{\tau}{2}\int_{r=r_0} \left[\varphi'''(r_0^+)-\varphi'''(r_0^-)+\frac{(d-1)}{r_0}\left(\varphi''(r_0^+)-\varphi''(r_0^-)\right)\right]|v|^2.\notag
\end{align}

To deal with the term involving $\D^2 v x\cdot\overline{\D v}$, it is convenient to first observe that for $j,k=1, \ldots, d$ we have
\begin{equation}\notag
\left(\frac{\partial}{\partial x_j}+i A_j \right)\left(\frac{\partial u}{\partial x_k} +i A_k u \right) = \left(\frac{\partial}{\partial x_k}+i A_k \right)\left(\frac{\partial u}{\partial x_j} +i A_j u \right) - i B_{jk}u,
\end{equation}
where recall that $B_{jk} = \frac{\partial A_j}{\partial x_k} - \frac{\partial A_k}{\partial x_j}$. As a consequence, one can conclude that
\begin{equation}\label{ondorioa}
\left(\frac{\partial}{\partial x_j}+i A_j \right)\overline{\left(\frac{\partial u}{\partial x_k} +i A_k u \right)} = \left(\frac{\partial}{\partial x_k}+i A_k \right)\overline{\left(\frac{\partial u}{\partial x_j} +i A_j u \right)} + i B_{jk}\bar{u}.
\end{equation}

Now we are ready to study the remaining term. We denote $\D^r u = \frac{x}{|x|}\cdot \D u$ and $\D^\bot u = \D u - \D^r u$. By writing
$$
\D^2 v \frac{x}{|x|}\cdot \overline{\D v} = \sum_{j,k=1}^d \left[\left(\frac{\partial}{\partial x_j} + iA_j \right)\left(\frac{\partial v}{\partial x_j}+iA_j v \right)\frac{x_k}{|x|}\left( \frac{\partial\bar{v}}{\partial x_k} -iA_k\bar{v} \right) \right],
$$ 
and using integration by parts, we get
\begin{align}\label{i2}
&-2\tau \Re \int \varphi'\D^2 v \frac{x}{|x|}\cdot \overline{\D v} = 2\tau \int \left(\varphi''-\frac{\varphi'}{|x|}\right)\left| \D^r v \right|^2 \\ \notag
&-2\tau \int_{r=r_0}(\varphi'(r_0^+)-\varphi'(r_0^-))\left|\D^r v \right|^2+  2\tau \int \varphi' |x|^{-1}|\D v|^2\\ \notag
&+ 2\tau \sum_{j,k=1}^d \Re \int\varphi' \left(\frac{\partial v}{\partial x_j} +iA_j v \right)\frac{x_k}{|x|}\left(\frac{\partial}{\partial x_j}+iA_j \right)\left(\frac{\partial \bar{v}}{\partial x_k}-iA_k \bar{v} \right).\notag\\
& \equiv I_1 + I_2 + I_3+I_4.\notag
\end{align}
By (\ref{ondorioa}) and using that $(B_\tau)_{j} = \sum_{k=1}^d \frac{x_k}{|x|}B_{jk}$, $B_\tau \cdot \D v = B_\tau \cdot \D^\bot v$, $\Re iz = -\Im z$ we get
\begin{align}\label{i3}
I_4 &= 2\tau \sum_{j,k=1}^d \Re \int \varphi'\left(\frac{\partial}{\partial x_k}+i A_k \right)\overline{\left(\frac{\partial v}{\partial x_j} +i A_j v \right)}\frac{x_k}{|x|} \left(\frac{\partial v}{\partial x_j} +iA_j v \right)\\
& -2\tau \Im \int  \varphi'B_\tau \cdot \D^\bot v \bar{v}\notag\\
& \equiv I_{41} + I_{42},\notag
\end{align} 
which at the same time, the integration by parts gives
\begin{align}\label{i4}
I_{41} = -\tau  \int \left(\varphi''+\varphi'\frac{d-1}{|x|}\right)|\D v|^{2}.
\end{align}

Hence, combining (\ref{i1})-(\ref{i4}) and using that  $|\D v|^2 - \left| \D^r v \right|^2 = |\D^\bot v|^2$, the terms involving $\D^2 v$ can be given by
\begin{align*}
&-\tau\int \left(\frac{\varphi^{IV}}{2} +\varphi'''\frac{(d-1)}{|x|}+\varphi''\frac{(d-1)(d-2)}{|x|^2}-\varphi' \frac{(d-1)(d-2)}{|x|^3}\right)|v|^2\\ \notag
&+ 2\tau\int \frac{\varphi'}{|x|}|\D^{\bot}v|^2+ 2\tau\int \varphi'' |\D^rv|^2- 2\tau \Im \int \varphi' B_\tau\cdot \D^{\bot}v \bar{v} \\\notag
& + \tau \Re \int_{r=r_0}\left(\varphi''(r_0^+)-\varphi''(r_0^-)\right)\frac{x}{|x|}\cdot\D v\bar{v}  \notag\\
&-\frac{\tau}{2}\int_{r=r_0} \left[\varphi'''(r_0^+)-\varphi'''(r_0^-)+\frac{(d-1)}{r_0}\left(\varphi''(r_0^+)-\varphi''(r_0^-)\right)\right]|v|^2.\notag
\end{align*}

As a consequence, combining the last computations with (\ref{mre}) and writing $\partial_r V=(\partial_r V)_+-(\partial_r V)_-$, we obtain

\begin{align}\label{c1}
&2\tau\int \left(\varphi' \frac{(d-1)(d-2)}{|x|^3}-\frac{\varphi^{IV}}{2} -\varphi'''\frac{(d-1)}{|x|}-\varphi''\frac{(d-1)(d-2)}{|x|^2}\right)|v|^2\\ \notag
&+ 4\tau\int \frac{\varphi'}{|x|}|\D^{\bot}v|^2+ 4\tau\int \varphi'' |\D^rv|^2+ 4\tau^3\int (\varphi')^2\varphi''|v|^2\\\notag
& -\frac{\tau\beta(d-2)}{2r_0^2}\int_{r=r_0} |v|^2 + 2\tau\int \varphi'(\partial_r V)_{+} |v|^2 \\ \notag
&\leq \int e^{2\tau \varphi}|\D^2 u +V u +\lambda u|^2 + 4\tau \Im \int \varphi' B_\tau\cdot \D^{\bot}v\bar{v} + 2\tau\int \varphi'(\partial_r V)_{-} |v|^2 \\\notag
& - \frac{\tau\beta}{r_0} \Re \int_{r=r_0}\D^r v\bar{v}.
\end{align}

Now replacing the values of the derivatives of $\varphi$, we get

\allowdisplaybreaks[4]
\begin{align}\label{c3}
&2\tau\beta\left(1+\frac{\log r_0}{2}\right)\left[(d-1)(d-2)\int_{r\leq r_0} \frac{|v|^2}{r^3}+2\int_{r\leq r_0} \frac{|\D^{\bot}v|^2}{r}\right] \\ \notag
&+\tau\beta(2+\log r_0)\int_{r\leq r_0} (\partial_r V)_+ |v|^2+\tau\beta\int_{r>r_0}(\log r+2)(\partial_r V)_+ |v|^2\\ \notag
&+2\tau\beta\int_{r>r_0}\frac{\log r}{r}|\D^{\bot}v|^2+4\tau\beta\int_{r>r_0}\frac{|\D^{\bot}v|^2}{r}+2\tau\beta\int_{r>r_0}\frac{|\D^rv|^2}{r}\\ \notag
&+\tau\beta[(d-1)^2-1]\int_{r>r_0}\frac{|v|^2}{r^3}+\tau\beta(d-1)(d-2)\int_{r>r_0}\frac{\log r}{r^3}|v|^2\notag\\
&+\frac{\tau^3\beta^3}{2}\int_{r>r_0}\frac{(\log r)^2}{r}|v|^2+2\tau^3\beta^3\int_{r>r_0}\frac{\log r}{r}|v|^2+2\tau^3\beta^3\int_{r>r_0}\frac{|v|^2}{r}\notag\\ 
&\leq \int e^{2\tau \varphi}|\D^2 u +V u +\lambda u|^2- \frac{\tau\beta}{r_0}\Re\int_{r=r_0} \D^r v\bar{v}+\frac{\tau\beta(d-2)}{2r_0^2}
\int_{r=r_0}|v|^2\notag\\ 
& + 2\tau\beta(\log r_0+2) \Im \int_{r\leq r_0} B_\tau\cdot \D^{\bot}v \bar{v}+2\tau\beta \Im \int_{r> r_0} (\log r+2)B_\tau\cdot \D^{\bot}v \bar{v} \notag\\
&+2\tau\beta(1+\frac{1}{2}\log r_0)\int_{r\leq r_0} (\partial_r V)_- |v|^2+\tau\beta\int_{r>r_0}\log r(\partial_r V)_- |v|^2\notag\\
&+2\tau\beta\int_{r>r_0}(\partial_r V)_- |v|^2.\notag
\end{align}

Until this moment the proof has been the same for both assumptions. However, from now on we will separate it for each case. \\

\textit{For Assumption \ref{ass1}. }
We would like to estimate the potential terms of the RHS of (\ref{c3}). By applying the Cauchy-Schwarz inequality to the magnetic field term in the RHS of (\ref{c3}) we have
\begin{align}\label{c4}
&2\tau\beta(\log r_0+2) \Im \int_{r\leq r_0} B_\tau\cdot \D^{\bot}v \bar{v}+2\tau\beta \Im \int_{r> r_0} (\log r+2)B_\tau\cdot \D^{\bot}v \bar{v}\\ \notag
&\leq 4\tau\beta(\frac{1}{2}\log r_0+1)\left(\int_{r\leq r_0}\frac{|\D^{\bot}v|^2}{r}\right)^{1/2}\left(\int_{r\leq r_0}|B_{\tau}|^2r|v|^2\right)^{1/2}\\ \notag
&+2\tau\beta\left(\int_{r> r_0}\log r\frac{|\D^{\bot}v|^2}{r}\right)^{1/2}\left(\int_{r> r_0}r \log r|B_{\tau}|^2|v|^2\right)^{1/2}\\ \notag
&+4\tau\beta\left(\int_{r> r_0}\frac{|\D^{\bot}v|^2}{r}\right)^{1/2}\left(\int_{r> r_0}r|B_{\tau}|^2|v|^2\right)^{1/2} \notag
\end{align}

Let us first study the case $r\leq r_0$ of all the potential terms. By Assumption \ref{ass1} and by (\ref{c4}),  the integrals in $r\leq r_0$ of the RHS of (\ref{c4}) can be bounded by
\begin{align}
2\tau\beta\left(\frac{\log r_0}{2}+1\right)\left[\mu_1\int_{r\leq r_0}\frac{|v|^2}{r^3}+2\nu_1\left(\int_{r\leq r_0}\frac{|\D^{\bot}v|^2}{r}\right)^{\frac{1}{2}}\left(\int_{r\leq r_0}\frac{|v|^2}{r^3}\right)^{\frac{1}{2}}\right].
\end{align}
The idea is to hide these terms with the first term in (\ref{c3}) following the approach in \cite{F}, section 3 or \cite{Z}. For simplicity, we denote
$$a:=\left(\int_{r\leq r_0}\frac{|\D^{\bot}v|^2}{r}\right)^{\frac{1}{2}}, \quad \qquad b:=\left(\int_{r\leq r_0}\frac{|v|^2}{r^3}\right)^{\frac{1}{2}}.$$
Hence, we need 
$$[(d-1)(d-2)-\mu_1]b^2+2a^2-2\nu_1ab>0,$$
with follows from the smallness conditions of the constants $\mu_1, \nu_1$ in Assumption \ref{ass1}.

Similarly, and also by Assumption \ref{ass1} and (\ref{c4}), for $r>r_0$, the terms containing $\log r$ in the RHS of (\ref{c3}) can be hidden with the terms containing $\frac{\log r}{r}$ in the left hand of (\ref{c3}). Finally, the rest of the potential terms in the RHS of (\ref{c3}) can be hidden with 
\begin{equation}\label{hide}
3\tau\beta\int_{r>r_0}\frac{|\D^{\bot}v|^2}{r}+2\tau^3\beta^3\int_{r>r_0}\frac{|v|^2}{r}.
\end{equation}
Since $d\geq 3$ we have the positivity in all the terms  of the LHS of (\ref{c3}) and (\ref{carleman}) follows.

\textit{For Assumption \ref{ass2}. } Since for $r\leq r_0$ the conditions are the same as the ones in Assumption \ref{ass1}, we will just study the case $r>r_0$. We go back to the RHS of (\ref{c3}) and we will start working with the terms containing $\log r$. Since the imaginary part is bounded by the absolute value, by Assumption \ref{ass2} we have that those terms are upper bounded by
\begin{align*}
&2\tau\beta c\int_{r>r_0} \log r \frac{|\D^{\bot}v|}{r^{1/2}}\frac{|v|}{r^{1/2+\mu}}+\tau\beta c\int_{r>r_0}\log r\frac{|v|^2}{r^{1+\mu}}.
\end{align*}
Now by using the Cauchy-Schwarz inequality and the property $2ab\leq a^2+b^2$ we estimate the previous term by
\begin{align}\label{sr1}
\frac{\tau\beta c}{r_0^{\mu}}\int_{r>r_0} \log r \frac{|\D^{\bot}v|^2}{r}+\frac{2\tau\beta c}{r_0^{\mu}}\int_{r>r_0}\log r\frac{|v|^2}{r}.
\end{align}
We hide (\ref{sr1}) with the terms containing $\frac{\log r}{r}$ in the LHS of (\ref{c3}). For this we need to have 
$$2\tau\beta-\frac{\tau\beta c}{r_0^{\mu}}>0,\qquad 2\tau^3\beta^3-\frac{2\tau\beta c}{r_0^{\mu}}>0,$$
which hold for $r_0$ large enough. 

To finish with the proof we will hide the remaining potential terms in the RHS of (\ref{c3}) in a similar way. Here also we will use assumption \ref{ass2} and the same properties as in the previous estimate so we get
\begin{align}\label{sr2}
&4\tau\beta\int_{r>r_0} B_{\tau}\cdot \D^{\bot}v \bar{v}+2\tau\beta\int_{r>r_0}(\partial_r V)_- |v|^2\\ \notag
&\leq 4\tau\beta\int_{r>r_0} \frac{|\D^{\bot}v|}{r^{1/2}}\frac{|v|}{r^{1/2+\mu}}+2\tau\beta c\int_{r>r_0}\frac{|v|^2}{r^{1+\mu}}\\ \notag
&\leq \frac{2\tau\beta c}{r_0^{\mu}}\int_{r>r_0}\frac{|\D^{\bot}v|^2}{r}+\frac{4\tau\beta c}{r_0^{\mu}}\int_{r>r_0}\frac{|v|^2}{r}.
\end{align}
The RHS of the previous estimate can be hidden with the terms (\ref{hide}), which are from the LHS of (\ref{c3}), if 
$$4\tau\beta-\frac{2\tau\beta c}{r_0^{\mu}}>0, \qquad 2\tau^3\beta^3-\frac{4\tau\beta c}{r_0^{\mu}}>0,$$
which is true for $r_0$ large enough. 

Combining all these computations and writing the integrals in terms of $u$ we obtain estimate (\ref{carleman}), which completes the proof.

\end{proof}

\begin{remark}\label{extra}
It is worth pointing out that the Carleman estimate (\ref{carleman}) is also true under weaker assumptions on the electric potential. In fact, if we require that 
\begin{equation*}
\int_{r\leq r_0} (\partial_r V)_-|u|^2\leq \mu_1 \int_{r\leq r_0}\frac{|u|^2}{r^3} +\int_{r\leq r_0} (\partial_r V)_+|u|^2
\end{equation*} 
and
\begin{equation*}
\int_{r> r_0} (\log r+2)(\partial_r V)_-|u|^2\leq \mu_2 \int_{r> r_0}\frac{|u|^2}{r} +\int_{r> r_0} (\log r+2)(\partial_r V)_+|u|^2
\end{equation*} 
in Assumption \ref{ass1} or
\begin{equation*}
\int_{r> r_0} (\log r+2)(\partial_r V)_-|u|^2\leq c \int_{r> r_0}\frac{|u|^2}{r^{1+\mu}} +\int_{r> r_0} (\log r+2)(\partial_r V)_+|u|^2
\end{equation*} 
in Assumption \ref{ass2}, then Theorem \ref{procarleman} follows.

\end{remark}

If we take $V=0$ then we get the following Carleman estimate for $H_A^{\lambda}$.

\begin{cor}\label{coroexpo}
Under assumptions of Theorem \ref{procarleman} with $V=0$, we obtain
\begin{align}\label{corcarleman}
&\tau \int e^{2\tau \varphi}\frac{|u|^2}{|x|^3}+\tau\int_{r > r_0} e^{2\tau \varphi}\frac{\log |x|}{|x|^3}|u|^2 \\
&+\tau^2(1+\tau)\int_{r>r_0} e^{2\tau \varphi}(\log |x|+1)\frac{|u|^2}{|x|}+\tau\lambda\int_{r>r_0} e^{2\tau \varphi}\frac{|u|^2}{|x|}\notag\\ 
&-\frac{\tau\beta(d-2)}{2r_0^2}\int_{r=r_0}e^{2\tau \varphi}|u|^2+\frac{\tau\beta}{r_0}\Re \int_{r=r_0}e^{2\tau \varphi}\D^ru \bar{u}\notag\\
& \leq C \int e^{2\tau \varphi}|\D^2 u +\lambda u|^2, \notag
\end{align}
where $C>0$ is independent of $\tau$.

\end{cor}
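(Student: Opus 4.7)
\medskip

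\noindent\textbf{Proof proposal.} The plan is to deduce the corollary directly from Theorem \ref{procarleman} by specializing to $V\equiv 0$, rather than redoing the integration-by-parts computation from scratch. First I would observe that, with $V=0$, the smallness assumption $\nu_1^2 + 2\mu_1 < 2(d-1)(d-2)$ and $\frac{2}{3}\nu_2^2 + \mu_2 < 1$ (or its analogue in Assumption \ref{ass2}) reduces to the purely magnetic condition on $\nu_1,\nu_2$, so the Carleman estimate (\ref{carleman}) is available for every $u \in C_0^\infty(\Rd)$.

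Next I would trace through which terms of Theorem \ref{procarleman} are affected. On the right-hand side of (\ref{carleman}) the potential $V$ appears only inside the modulus $|\D^2 u + Vu + \lambda u|^2$, which collapses to $|\D^2 u + \lambda u|^2$; this is exactly the right-hand side claimed in (\ref{corcarleman}). On the left-hand side, every explicit appearance of $V$ comes from the terms $\tau\int\varphi'(\partial_r V)_+|v|^2$ and its counterpart $(\partial_r V)_-$, which were moved to the two sides during the derivation of (\ref{c3}); when $V\equiv 0$ these simply vanish and do not need to be absorbed. Consequently, no new smallness step for the electric potential is required, and the hiding arguments in the proof of Theorem \ref{procarleman} — the one bounding the $B_\tau$ term by Cauchy–Schwarz against $\int |\D^\bot v|^2/r$ and $\int |v|^2/r^3$ (resp.\ $\int |v|^2/r$ for $r>r_0$) — proceed unchanged.

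Finally I would note that the left-hand side of (\ref{carleman}) actually contains one additional positive term that is absent from (\ref{corcarleman}), namely $\tau\beta\int_{r>r_0} e^{2\tau\varphi}|\D u|^2$. Since this integrand is nonnegative, dropping it only weakens the inequality, and the remaining six terms on the left are exactly those listed in (\ref{corcarleman}). This yields the corollary with the same constant $C>0$, independent of $\tau$.

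There is no substantial obstacle here: the only point to watch is that the constraints used to hide the $B_\tau$ contribution in the proof of Theorem \ref{procarleman} depend only on $\nu_1,\nu_2$ (or $c,\mu,\nu_1$ in the case of Assumption \ref{ass2}) and on $d\geq 3$, so they continue to hold when $V=0$, and no new choice of $r_0$ is needed beyond the one already required in Theorem \ref{procarleman}.
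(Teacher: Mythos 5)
Your proposal is correct and coincides with what the paper does: the corollary is obtained simply by setting $V=0$ in Theorem \ref{procarleman}, whereupon all $V$-dependent terms vanish and the nonnegative term $\tau\beta\int_{r>r_0}e^{2\tau\varphi}|\D u|^2$ may be dropped from the left-hand side. No further argument is needed.
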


\subsection{Polynomial weight}\label{policarleman}

We show a Carleman estimate that generalizes the one given by H\"ormander \cite{Ho} (Proposition 14.7.1) to the magnetic case.

\begin{thm}\label{carlemanpoly}
Let $d \geq 3$, $\tau >0$, $\lambda > 0$. Assume that $B_\tau \equiv 0$ and $V$ satisfies (\ref{assuV}). Then for all $u \in C_0^\infty(\Rd)$ holds
\begin{equation}\label{polycarleman}
4\tau\lambda \int |x|^{2\tau} |u|^2 \leq \int |x|^{2\tau +2} \left|\D^2 u+Vu +\lambda u \right|^2.
\end{equation} 
\end{thm}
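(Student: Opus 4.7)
The plan is to run the Carleman scheme of Theorem \ref{procarleman} with the logarithmic weight $\varphi(|x|)=\log|x|$ in place of (\ref{fi}). Setting $w=|x|^{\tau}u=e^{\tau\varphi}u$ and substituting $\varphi'=1/r$, $\varphi''=-1/r^{2}$ into (\ref{equ1}), one writes $e^{\tau\varphi}(\D^{2}u+Vu+\lambda u)=L_{1}w+L_{2}w$ where
$$
L_{1}w = \D^{2}w+Vw+\lambda w+\tfrac{\tau^{2}}{r^{2}}w,\qquad L_{2}w = -\tfrac{2\tau}{r}\D^{r}w-\tfrac{\tau(d-2)}{r^{2}}w.
$$
The constant $-\tau(d-2)/r^{2}$ is designed to make $L_{2}$ skew-symmetric in $L^{2}$: a short polar integration by parts yields $\frac{1}{r}\D^{r}+(\frac{1}{r}\D^{r})^{*}=-(d-2)/r^{2}$, which cancels the symmetric part of $-\frac{2\tau}{r}\D^{r}$. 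Since $|x|^{2\tau+2}=r^{2}e^{2\tau\varphi}$, inequality (\ref{polycarleman}) is equivalent to $\int r^{2}|L_{1}w+L_{2}w|^{2}\geq 4\tau\lambda\int|w|^{2}$. Expanding the square and discarding the non-negative terms $\int r^{2}|L_{i}w|^{2}$ reduces the proof to the identity
$$
2\Re\int r^{2}L_{1}w\,\overline{L_{2}w}\ =\ 2\tau\int\bigl[\partial_{r}(rV)+V\bigr]|w|^{2}+4\tau\lambda\int|w|^{2};
$$
hypothesis (\ref{assuV}) applied with test function $w$ then makes the first summand non-negative and gives (\ref{polycarleman}).

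The remainder of the proof is the computation of this cross term via integration by parts. I would split it into the four natural pieces coming from the two summands of $L_{1}$ and the two of $L_{2}$. The self-interactions of the multipliers $\tau^{2}/r^{2}$ and $-\tau(d-2)/r^{2}$, after a single radial integration by parts using $\Re\int \frac{w\,\overline{\D^{r}w}}{r}\,dx=-\tfrac{d-2}{2}\int|w|^{2}/r^{2}\,dx$, contribute $\pm 2\tau^{3}(d-2)\int|w|^{2}/r^{2}$ and cancel. The piece $(\D^{2}+V+\lambda)w\cdot\overline{-\tau(d-2)w/r^{2}}$ is elementary via $\Re\int\D^{2}w\,\bar w=-\int|\D w|^{2}$ and contributes $2\tau(d-2)\int|\D w|^{2}-2\tau(d-2)\int V|w|^{2}-2\tau(d-2)\lambda\int|w|^{2}$. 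The core of the computation is a magnetic Morawetz identity for the final piece $(\D^{2}+V+\lambda)w\cdot\overline{-\frac{2\tau}{r}\D^{r}w}$: two successive integrations by parts with the magnetic gradient and the polar identity $\int r\,\partial_{r}f\,dx=-d\int f\,dx$ give
$$
\Re\int r\,\D^{2}w\,\overline{\D^{r}w}\,dx = \tfrac{d-2}{2}\int|\D w|^{2},
$$
together with $\Re\int rVw\,\overline{\D^{r}w}\,dx=-\tfrac{1}{2}\int\partial_{r}(rV)|w|^{2}-\tfrac{d-1}{2}\int V|w|^{2}$ and $\Re\int rw\,\overline{\D^{r}w}\,dx=-\tfrac{d}{2}\int|w|^{2}$. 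Adding the four contributions, the $|\D w|^{2}$ terms cancel exactly, the $V|w|^{2}$ terms telescope to $2\tau\int V|w|^{2}$, and the $\lambda|w|^{2}$ terms telescope to $4\tau\lambda\int|w|^{2}$, producing the identity displayed above.

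The expected main obstacle is the Morawetz identity for $\Re\int r\,\D^{2}w\,\overline{\D^{r}w}$. It requires commuting $\D_{j}$ with the radial vector field $\D^{r}$, which generates
$$
\D_{j}\D^{r}w = \tfrac{1}{r}\D_{j}w - \tfrac{x_{j}}{r^{2}}\D^{r}w + \D^{r}\D_{j}w - i(B_{\tau})_{j}w,
$$
and at this point the hypothesis $B_{\tau}\equiv 0$ is essential: without it a term of the form $-2\tau\,\Im\int B_{\tau}\cdot\D^{\bot}w\,\bar w$, analogous to the magnetic-field terms appearing in the proof of Theorem \ref{procarleman}, would survive, and in the present polynomial-weight setting no smallness assumption on $B_{\tau}$ is available to absorb it. Once the curvature term is killed, the radial integrations by parts close cleanly and the coefficient bookkeeping collapses to the target identity.
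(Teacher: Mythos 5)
Your proposal is correct and follows essentially the same route as the paper: conjugate by $|x|^{\tau}$, split into the symmetric part $L_{1}$ and skew-symmetric part $L_{2}$, insert the extra weight $|x|^{2}$ into $M=\int|x|^{2}|L_{1}v+L_{2}v|^{2}$, and reduce everything to the cross-term identity $2\Re\int|x|^{2}L_{1}v\,\overline{L_{2}v}=4\tau\lambda\int|v|^{2}+2\tau\int[\partial_{r}(rV)+V]|v|^{2}$ (the paper's (\ref{mre2}) with $B_{\tau}\equiv0$), which you then close with (\ref{assuV}). Your detailed bookkeeping of the four integration-by-parts pieces checks out (the $|\D w|^{2}$ and $\tau^{3}$ terms cancel exactly as you claim), so you have simply supplied the computation the paper compresses into ``by integration by parts techniques we get.''
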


\begin{proof}
We follow the approach of Theorem \ref{procarleman}. Let $\varphi(x) = \log |x|$ so that $v=|x|^\tau u$. Then identity (\ref{equ1}) becomes into
\begin{align}
|x|^{\tau} \left(\D^2 u + V u + \lambda u \right) & = \D^2 v + V v + \lambda v + \tau^2 |x|^{-2}v\notag\\ 
& - \frac{2\tau}{|x|^{2}}\left(x \cdot \D v + \frac{(d-2)}{2}v \right)\notag
\end{align}
and 
\begin{align}
& L_1 v = (\D^2 + V + \lambda  + \tau^2 |x|^{-2}) v \notag\\
& L_2  v= - \frac{2\tau}{|x|^2}\left(x\cdot \D v + \frac{(d-2)}{2}v \right).\notag
\end{align}

Let $M$ denote the integral on the right-hand side of the inequality (\ref{polycarleman}), obtaining
\begin{align}
M & := \int |x|^2 |L_1 v + L_2 v|^2 \notag\\
& = \int |x|^2 |L_1 v|^2 + \int |x|^2 |L_ 2 v|^2 + 2\Re \int |x|^2 L_1 v \overline{L_2 v}. \notag
\end{align}
In particular, it yields
\begin{align}\label{mre1}
2\Re \int |x|^2 L_1 v \overline{L_2 v} \leq M.
\end{align}

We can now proceed analogously to the proof of Theorem \ref{procarleman}. By integration by parts techniques we get
\begin{align}
2\Re \int |x|^2  L_1 v \overline{L_2 v} & = 4\tau\lambda \int |v|^2 + 2\tau \int \left[ \partial_r (rV) + V \right] |v|^2\label{mre2}\\ 
& - 4\tau \Im \int |x| B_\tau \cdot \D v \bar{v}\notag.
\end{align}

Now, writing $\partial_{r}(rV) + V = [\partial_{r}(rV) + V]_{+} - [\partial_{r}(rV) + V]_{-}$ and using (\ref{assuV}), $B_{\tau} \equiv 0$, (\ref{mre1}) and (\ref{mre2}) we get (\ref{polycarleman}), and the proof is complete.

\end{proof}

\begin{remark}
In the polynomial case we consider $M = \int |x|^2 |L_1 v + L_2 v|^2$ and not $M = \int |L_1 v + L_2 v|^2$ as in the proof of Theorem \ref{procarleman}. The argument in Theorem \ref{procarleman} does not work properly with $\varphi = \log |x|$ because we will obtain
\begin{align}
2\Re \int L_1 v \overline{L_2 v} & = -[2\tau^3 + \tau(2d-5)]\int \frac{|v|^2}{|x|^4} + 2\tau \int \frac{|\D^{\bot}v|^2}{|x|^2} - 2\tau \int \frac{|\D^r v|^2}{|x|^2},
\end{align}
and the positivity of the terms breaks down. We solve this problem including the weight $|x|^2$ in the definition of $M$. However, this does not give any gradient term $\D v$ in the left hand-side of the inequality (\ref{polycarleman}). Hence, we are not able to hide the potential term and we need to assume that $B_\tau \equiv 0$.
\end{remark}

Taking $V =0$ in the above result, we get the analogue estimate as H\"ormander does for the magnetic Laplacian.

\begin{cor}\label{coropoly}
Let $d\geq 3$, $\tau >0$, $\lambda >0$. Assume that $B_{\tau} \equiv 0$. Then for all $u \in C_{0}^{\infty}(\Rd)$ yields
\begin{equation}\label{polycarleman1}
4\tau\lambda \int |x|^{2\tau} |u|^2 \leq \int |x|^{2\tau +2} \left|\D^2 u +\lambda u \right|^2.
\end{equation} 
\end{cor}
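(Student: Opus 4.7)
The plan is to specialize the proof of Theorem \ref{carlemanpoly} to the case $V\equiv 0$, rather than invoke that theorem as a black box. Indeed, condition (\ref{assuV}) degenerates to the strict inequality $0<0$ when $V=0$, so the corollary is not literally a special case of the hypotheses of Theorem \ref{carlemanpoly}; one simply re-runs its proof with the potential removed throughout.

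Set $\varphi(x)=\log|x|$ and $v=|x|^{\tau}u$. The conjugation identity from the proof of Theorem \ref{carlemanpoly} specializes to $|x|^{\tau}(\D^2 u+\lambda u) = L_1 v + L_2 v$, with
\[
L_1 v = \bigl(\D^2 + \lambda + \tau^2|x|^{-2}\bigr)v, \qquad L_2 v = -\frac{2\tau}{|x|^2}\left(x\cdot \D v + \frac{d-2}{2}v\right),
\]
the first symmetric and the second skew-symmetric. The elementary expansion
\[
\int |x|^{2\tau+2}\bigl|\D^2 u + \lambda u\bigr|^2 = \int |x|^2 |L_1 v|^2 + \int |x|^2 |L_2 v|^2 + 2\Re \int |x|^2\, L_1 v\,\overline{L_2 v}
\]
reduces the estimate to bounding the cross term from below.

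The cross term is evaluated by exactly the integration-by-parts manipulations that yield (\ref{mre2}). Under the present hypotheses the two error contributions appearing there, namely $2\tau\int [\partial_r(rV)+V]|v|^2$ and $-4\tau\,\Im\int |x|\,B_\tau\cdot \D v\,\bar v$, vanish identically because $V\equiv 0$ and $B_\tau\equiv 0$. What remains is the clean identity
\[
2\Re \int |x|^2\, L_1 v\,\overline{L_2 v} = 4\tau\lambda\int |v|^2.
\]
Substituting back $v=|x|^{\tau}u$ yields (\ref{polycarleman1}).

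I anticipate no substantive obstacle: all the nontrivial algebra, in particular the cancellations that reduce the cross term to the single expression $4\tau\lambda\int|v|^2$, is already carried out inside the proof of Theorem \ref{carlemanpoly}, and since the potentially dangerous potential and magnetic contributions vanish outright, no Cauchy--Schwarz absorption is required here.
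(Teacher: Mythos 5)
Your proof is correct and takes essentially the same route as the paper, which obtains the corollary simply by setting $V=0$ in Theorem \ref{carlemanpoly}. Your observation that the strict inequality (\ref{assuV}) degenerates to $0<0$ when $V\equiv 0$ is a fair technical point, and your remedy---re-running the proof so that the potential and magnetic contributions in (\ref{mre2}) are simply absent, leaving the clean identity $2\Re\int|x|^2 L_1v\,\overline{L_2v}=4\tau\lambda\int|v|^2$---is exactly what the paper implicitly intends.
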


\section{Unique continuation}\label{uc}

In this section we prove the main results of this paper. We show that $H_{A,V}^{\lambda}$ has a unique continuation property by using the Carleman estimates with exponential weights (Theorem \ref{procarleman}, Corollary \ref{coroexpo}). The strong unique continuation property for $H_{A,V}^{\lambda}$ is proved by estimates involving polynomial weights (Theorem \ref{carlemanpoly}, Corollary \ref{coropoly}) . We follow the same scheme as H\"ormander \cite{Ho} (Theorem 7.4.2) for both theorems. 

First we introduce some preliminaries that will be useful for the proofs. Let $u$ be a solution of the equation $\D^2 u + V u + \lambda u = 0.$ Choose $\chi \in C_{0}^{\infty}$ such that $\chi(x)=1$ when $|x|\leq 1$ and $\chi(x)=0$ when $|x|>2$. Set 
\begin{align}\label{urR}
u_{\rho,R}(x) = \chi\left( \frac{x}{R}\right)u_\rho(x) \quad \text{where} \quad u_{\rho}(x)= \left(1-\chi\left(\frac{x}{\rho} \right) \right)u(x),
\end{align}
for $\rho$ small and $R$ large.

The main idea of the proofs is to apply the corresponding Carleman estimate to $u_{\rho,R}$ and let $R \to \infty$.

Let us next show some properties related to $u_{\rho,R}$. By (\ref{urR}) it yields
\begin{align}\label{lapmag}
(\D^2 + V + \lambda)u_{\rho,R}  & = \chi\left(\frac{x}{R}\right)(\D^2 + V + \lambda)u_{\rho} + 2\frac{\chi'}{R}\frac{x}{|x|}\cdot\D u_\rho \\
& + \left[\frac{\chi'(d-1)}{|x|R} + \frac{\chi''}{R^2} \right]u_\rho.\notag
\end{align}
Note that for any $R$, 
\begin{equation}\label{ur1}
u_{\rho,R} \geq u_\rho.
\end{equation}
In addition, since $\chi'$, $\chi''$ are defined in the annulus $R \leq |x| \leq 2R$, by (\ref{lapmag}) it follows that for a given function $\varphi$
\begin{align}\label{ur2}
\int e^{2\tau\varphi}\left|(\D^2 + V + \lambda)u_{\rho,R} \right|^2 & \leq  \int_{|x| <2R} e^{2\tau\varphi}\left|(\D^2 + V + \lambda)u_\rho \right|^2\\
& + \int_{R < |x| < 2R} e^{2\tau\varphi}\left[ \frac{|\D u_\rho|^2}{R^2} +
\frac{|u_\rho|^2}{R^4}\right].\notag
\end{align}

Finally, observe that
\begin{align}\label{ur3}
(\D^2 + V + \lambda) u_\rho & = \left(1-\chi\left(\frac{x}{\rho}\right)\right)(\D^2 + V + \lambda )u - \frac{2\chi'}{\rho}\frac{x}{|x|}\cdot \D u\\
& - \left(\frac{\chi''}{\rho^2} + \frac{\chi'(d-1)}{|x|\rho} \right)u\notag.
\end{align}

Now we are ready to show the unique continuation theorems.

\subsection{Proof of Theorem \ref{UCP}}
We begin by analyzing the boundary terms of the estimate (\ref{carleman}) in order to get the positivity of its LHS. Since $u$ is solution of $\D^2 u + V u +\lambda u = 0,$ multiplying this equation by $e^{2\tau\varphi}\bar{u}$ and integrating over $r> r_0$, yields
\begin{align}\notag
\int_{r>r_0}e^{2\tau\varphi}\D^2 u \bar{u} + \int_{r>r_0} e^{2\tau\varphi} V|u|^2 + \lambda\int_{r>r_0}e^{2\tau\varphi}|u|^2 = 0.
\end{align}
Integrating by parts the first term of the above identity, taking the real parts and inserting it in the identity, we get
\begin{align}\label{iu1}
\Re\int_{r=r_0} e^{2\tau\varphi} \D^r u\bar{u}  =& -\int_{r>r_0} e^{2\tau\varphi}|\D u|^2 +\int_{r>r_0} e^{2\tau\varphi} V|u|^2 \\
& +\lambda \int_{r>r_0} e^{2\tau\varphi} |u|^2 +\tau(d-1) \int_{r>r_0} e^{2\tau\varphi}\varphi' \frac{|u|^2}{r}\notag\\
& + \tau\int_{r>r_0} e^{2\tau\varphi} \varphi''|u|^2 +2\tau^2 \int_{r>r_0} e^{2\tau\varphi} (\varphi')^2 |u|^2\notag\\ 
&  +\tau\int_{r=r_0} \varphi'(r_0)e^{2\tau\varphi} |u|^2.\notag
\end{align}

Let us insert (\ref{iu1}) into (\ref{carleman}). Observe  that the regularity assumption $u\in \h^1$ is enough to justify all the integral terms in (\ref{iu1}) and (\ref{carleman}). Now we show the positivity of the terms in the resulting estimate. Taking $\tau > \frac{(d-2)}{\beta r_0(2+\log r_0)}$ and replacing $\varphi'$ using (\ref{fi}), we get
$$
\frac{\tau\beta}{r_0}\left[\tau\varphi'(r_0) -\frac{(d-2)}{2r_0} \right]\int_{r=r_0}e^{2\tau\varphi} |u|^2 > 0.
$$
In addition, since $r_0 \geq 1$, we have
$$
\tau\beta \left(1-\frac{1}{r_0} \right)\int_{r>r_0} e^{2\tau\varphi}|\D u|^2 > 0.
$$
Finally, writing $V = V_+ - V_-$, by (\ref{fi}) and (\ref{vbarrixa}), it is easy to check that
$$
\int_{r>r_0} e^{2\tau\varphi} \left[2\tau^2(\varphi')^2 + \frac{\tau(d-1)\varphi'}{r} + \tau\varphi'' - V_{-} \right] |u|^2 \geq 0.
$$

Hence, estimate (\ref{carleman}) gives
\begin{align}\label{carlemanuc1}
\tau \int e^{2\tau \varphi}\frac{|u|^2}{r^3}+&\tau^2(1+\tau)\int_{r>r_0} e^{2\tau \varphi}(\log r+1)\frac{|u|^2}{r}\\
& \leq C \int e^{2\tau \varphi}|\D^2 u +V u +\lambda u|^2. \notag
\end{align}
This estimate is the one that we use to prove the result.

We first work under hypotheses (i). Let us apply the Carleman estimate (\ref{carlemanuc1}) to $u_{\rho,R}$. In particular, it holds
\begin{align}\label{*}
\tau\int  e^{2\tau\varphi} \frac{|u_{\rho,R}|^2}{r^3} &  \leq C \int e^{2\tau\varphi} \left|\left(\D^2 + V + \lambda \right) u_{\rho,R} \right|^2.
\end{align}

Now, by (\ref{ur1}) it follows that the left hand-side of (\ref{*}) can be lower bounded by
\begin{equation}
\nu \tau \int e^{2\tau\varphi} \frac{|u_{\rho}|^2}{r^3},
\end{equation}
for some $\nu >0$.

Concerning the right hand-side of (\ref{*}), first apply (\ref{ur2}) and let us analyze its right hand-side. Take $R > r_0$ and $\rho < \frac{1}{2}$ so that $R > 2\rho$ and observe that $u_\rho = u$ if $x > 2\rho$. Hence, by using that $\varphi \leq \frac{\beta}{2}|x|(\log |x| + 2)$ when $|x| > r_0$, we have
\begin{align}
\int_{R < |x| < 2R} e^{2\tau\varphi}\left[ \frac{|\D u_\rho|^2}{R^2} +
\frac{|u_\rho|^2}{R^4}\right] &\leq \frac{e^{2\beta\tau R(\log (2R) + 2)}}{R^2}\int_{|x| > R}  (|\D u|^2 + |u|^2)\notag,
\end{align}
which by (\ref{exp}) tends to zero as $R \to \infty$.

To deal with the $(\D^2 + V + \lambda)u_\rho$ term, we use (\ref{ur3}). Note that in this case $\chi'$, $\chi''$ are defined on $\rho\leq |x| \leq 2\rho$. Take $\rho$ such that $2\rho < 1$. Thus by (\ref{fi}) and since $u\in \mathcal{H}^1$, yields
\begin{align}
\int_{|x|<2R} e^{2\tau\varphi}\left|(\D^2 + V +\lambda)u_\rho \right|^2 & \leq  \frac{1}{\rho^4} \int_{\rho\leq |x| \leq 2\rho} e^{2\tau\varphi}(|\D u|^2 + |u|^2)\notag\\
& \leq \frac{Ce^{2\tau\beta\rho(2 + \log r_0)}}{\rho^4}\notag.
\end{align}

Therefore, letting $R \to \infty$, we deduce
\begin{equation}
\int e^{2\tau\varphi} \frac{|u_\rho|^2}{r^3} \leq \frac{Ce^{2\tau\beta\rho(2 + \log r_0)}}{\tau \rho^4}.
\end{equation}
By (\ref{fi}) and since $2\rho < 1 \leq r_0$, we get
\begin{align}
\int e^{2\tau\varphi} \frac{|u_\rho|^2}{r^3} & > \int_{2\rho < |x| < r_0} e^{2\tau\varphi} \frac{|u|^2}{r^3} + \int_{|x|> r_0} e^{2\tau\varphi}\frac{|u|^2}{r^3} \notag\\
& > \min \left\{ e^{4\tau\beta\rho + 2\tau\beta\rho\log r_0}, e^{\tau\beta (r_0 + 1) + \tau\beta r_0\log r_0} \right\}\int_{|x|> 2\rho} \frac{|u|^2}{r^3}\notag\\
& = e^{2\tau\beta\rho (2+\log r_0)}\int_{|x|> 2\rho} \frac{|u|^2}{r^3}\notag,
\end{align}
which implies that
\begin{align}
\int_{|x|> 2\rho} \frac{|u|^2}{r^3} \leq \frac{C}{\tau\rho^4}.
\end{align}
Letting $\tau \to \infty$, we have
\begin{equation}
\int_{|x|> 2\rho} \frac{|u|^2}{r^3} = 0.
\end{equation}
Thus we deduce that 
$$
\Vert u\Vert_{L^{2, -\frac{3}{2}}(|x|>2\rho)}=0.
$$
Here $L^{2, -\frac{3}{2}}(|x|>2\rho)$ denotes the weighted space
$$
L^{2,\delta}(G) = \{ f : (1+|x|^2)^{\delta/2}f \in L^{2}(G)\}
$$
with $\delta= -3/2$ and $G= \{|x| > 2\rho\}$. As a consequence, it may be concluded that $u=0$ when $|x|> 2\rho$. Finally, since $\rho$ is any small positive number, we get that $u \equiv 0$ everywhere.

We now turn to the condition (ii). In this case we apply the Carleman estimate (\ref{carlemanuc1}) with $V=0$ to $u_{\rho,R}$ with $\rho$ such that $2\rho < 1$. Thus we obtain
\begin{align}\label{**}
&\tau\int_{|x|\leq r_{0}} e^{2\tau\varphi} \frac{|u_{\rho,R}|^2}{r^3} + \tau^2(1+\tau)\int_{|x|>r_{0}} e^{2\tau\varphi} (1+\log r)\frac{|u_{\rho,R}|^2}{r} \\
&\leq C \int e^{2\tau\varphi} \left| (\D^2 + \lambda) u_{\rho,R} \right|^2.\notag
\end{align}

The left-hand side of the inequality can be handled in the same way as above. We get that it is lower bounded by
\begin{equation}
\tau\int_{|x|\leq r_{0}} e^{2\tau\varphi}\frac{|u_\rho|^2}{r^3} + \tau^2(\tau + 1)  \int_{|x|> r_{0}} e^{2\tau\varphi} (1+\log r)\frac{|u_{\rho}|^2}{r}.
\end{equation}

To deal with the right-hand side of (\ref{**}) we use (\ref{ur2}) and we proceed as in (i). The only difference is the analysis of the term $(\D^2 + \lambda)u_{\rho}$. By (\ref{ur3}) with $V=0$, using that $(\D^2 + \lambda)u = -V u$, (\ref{Vucp}) and $u\in \mathcal{H}^1$, it follows that
\begin{align}
&\int_{|x|< 2R} e^{2\tau\varphi} \left|(\D^2 + \lambda)u_{\rho}\right|^2  \leq \int_{|x|< 2R} e^{2\tau\varphi}|V u_{\rho}|^2 + \frac{Ce^{2\tau\rho(2\beta + \beta\log r_0)}}{\rho^4} \notag\\
& \leq \gamma_{1}\int_{|x|\leq r_{0}} e^{2\tau\varphi} \frac{|u_\rho|^{2}}{r^3} + \gamma_{2}\int_{|x|> r_{0}} e^{2\tau\varphi}(1+\log r) \frac{|u_\rho|^2}{r} +\frac{Ce^{2\tau\rho(2\beta + \beta\log r_0)}}{\rho^4}\notag.
\end{align}

Hence, taking $\tau$ large enough such that $\tau - \gamma_1 >0$ and $\tau^2(\tau +1) - \gamma_2 >0$, since $2\rho<1 \leq r_{0}$, we get
\begin{align}
\int_{2\rho < |x| \leq r_{0}} e^{2\tau\varphi}\frac{|u|^2}{r^3} + \int_{|x|> r_0} e^{2\tau\varphi}\frac{|u|^2}{r}\leq \frac{Ce^{2\tau\rho(2\beta + \beta\log r_0)}}{\tau \rho^{4}}.
\end{align}
Now using that $\frac{1}{r} > \frac{1}{r_0}$ in the region $\{2\rho < |x| \leq r_0 \}$ and $1 \geq \frac{1}{r_0}$, it may be concluded that
\begin{equation}
\int_{|x| > 2\rho} \frac{|u|^2}{r} \leq \frac{C r_{0}^2}{\tau \rho^4}.
\end{equation}
Thus letting $\tau \to \infty$, we have
\begin{equation}
\int_{|x|> 2\rho} \frac{|u|^2}{r} = 0
\end{equation}
and we deduce that 
\begin{equation}
\Vert u \Vert^2_{L^{2, -\frac{1}{2}}(|x|> 2\rho)} = 0.
\end{equation}
This gives $u = 0$, which completes the proof.


\subsection{Proof of Theorem \ref{(SUCP)}}

This follows in much the same way as in the proof of Theorem \ref{UCP}. Thus we only give the main ideas of the proof.

When $V$ holds condition (\ref{assuV}), we apply the Carleman estimate (\ref{polycarleman}) to $u_{\rho, R}$. Then by (\ref{pol}), analysis similar to that in the first part of the proof of Theorem \ref{UCP} gives
\begin{equation}
\lambda\tau \int |x|^{2\tau}|u_{\rho}|^2 \leq C \rho^{2\tau -2}.
\end{equation}
Hence, it yields
\begin{equation}
\int_{|x|> 2\rho} |u|^{2} \leq \frac{C}{\tau\lambda \rho^2},
\end{equation}
and it can be deduced that $u=0$ everywhere.

The same conclusion can be drawn for the case when the potential $V$ satisfies (\ref{Vsucp}), by using the Carleman estimate (\ref{polycarleman1}), condition (\ref{pol}) for the solution $u$ and the same reasoning as in the second part of the proof of Theorem \ref{UCP}. The details are left to the reader.

\section*{Acknowledgements} The author gratefully acknowledges the many helpful suggestions of Luis Escauriaza and Luis Vega during the preparation of the paper. The first author was supported in part by MTM2011-24054 and IT641-13. The second author has received funding from European Research Council under the European Union's Seventh Framework Programme (FP7/2007-2013) / ERC grant agreement n. 267700.

\end{document}